\newtheorem{theorem}{Theorem}
\newtheorem{corollary}{Corollary}
\newtheorem{lemma}{Lemma}
\begin{document}

\title{On two problems of Carlitz and their generalizations}

\author{Ioulia N. Baoulina}

\address{Department of Mathematics, Moscow State Pedagogical University, Krasnoprudnaya str. 14, Moscow 107140, Russia}
\email{jbaulina@mail.ru}

\date{}

\maketitle

\begin{abstract}
Let $N_q$ be the number of solutions to the equation
$$
(a_1^{}x_1^{m_1}+\dots+a_n^{}x_n^{m_n})^k=bx_1^{k_1}\cdots x_n^{k_n}
$$
over the finite field $\mathbb F_q=\mathbb F_{p^s}$. Carlitz found formulas for~$N_q$ when $k_1=\dots=k_n=m_1=\dots=m_n=1$, $k=2$, $n=3$ or $4$, $p>2$; and when ${m_1=\dots=m_n=2}$,\linebreak $k=k_1=\dots=k_n=1$, $n=3$ or $4$, $p>2$. In earlier papers, we studied the\linebreak above equation with $k_1=\dots=k_n=1$ and obtained some generalizations of Car-\linebreak litz's results. Recently, Pan, Zhao and Cao considered the case of arbitrary positive\linebreak integers $k_1,\dots,k_n$ and proved the formula $N_q=q^{n-1}+(-1)^{n-1}$, provided that\linebreak $\gcd(\sum_{j=1}^n (k_jm_1\cdots m_n/m_j)-km_1\cdots m_n,q-1)=1$. In this chapter, we determine $N_q$ explicitly in some other cases.
\end{abstract}

\keywords{{\it Keywords}: Equation over a finite field, diagonal equation, number of solutions.}

\subjclass{2010 Mathematics Subject Classification: 11D79, 11G25, 11T24.}

\thispagestyle{empty}

\section{Introduction}

Let $\mathbb F_q$ be a finite field of characteristic~$p$ with $q=p^s$ elements, $\mathbb F_q^*=\mathbb F_q^{}\setminus\{ 0\}$. For $p>2$, let $\eta$ denote the quadratic character on $\mathbb F_q$ ($\eta(x)=+1,-1,0$ according as $x$ is a square, a nonsquare or zero in~$\mathbb F_q$). We consider an equation of the type
\begin{equation}
\label{eq1}
(a_1^{}x_1^{m_1}+\dots+a_n^{}x_n^{m_n})^k=bx_1^{k_1}\cdots x_n^{k_n},
\end{equation}
where $a_1,\dots,a_n,b\in\mathbb F_q^*$, $k,k_1,\dots,k_n, m_1,\dots,m_n$ are positive integers, and $n\geqslant 2$. By $N_q$ denote the number of solutions $(x_1,\dots,x_n)\in\mathbb F_q^n$ to (\ref{eq1}). Let
\begin{align*}
k_0&=\gcd(k,k_1,\dots,k_n,q-1),\qquad M=\text{lcm}[m_1,\dots,m_n],\\
d_j&=\gcd(m_j,q-1),\quad j=1,\dots, n,\qquad D=\text{lcm}[d_1,\dots,d_n],\\
d&=\gcd\Bigl(\,\sum_{j=1}^n\frac {k_jM}{m_j}-kM,\frac{k_1(q-1)}{d_1},\dots,\frac{k_n(q-1)}{d_n},q-1\Bigr).
\end{align*}
Note that $k_0\mid d$. Moreover, if $k_1=\dots=k_n=1$, then
$$
d=\gcd\Bigl(\,\sum_{j=1}^n\frac {k_jM}{m_j}-kM,\frac{q-1}D\,\Bigr).
$$
Carlitz~\cite{C2} studied the case $k_1=\dots=k_n=m_1=\dots=m_n=1$, $k=2$, $p>2$. He proved that in this case
$$
N_q=\begin{cases}
q^2+1&\text{if $n=3$,}\\
q^3-1-\eta(a)q\,\,&\text{if $n=4$.}
\end{cases}
$$
In another paper~\cite{C1}, Carlitz treated the case $m_1=\dots=m_n=2$, $k=k_1=\dots=k_n=1$, $p>2$, $n=3$ or $4$.
 In a series of papers (see~\cite{B1,B2,B3,B4} and references therein) we considered \eqref{eq1} with $k_1=\dots=k_n=1$ and obtained some generalizations of Carlitz's evaluations. Our results cover the following cases:
\begin{itemize}
\item[(a)] 
$m_1=\dots=m_n=1$, $d=1$ or 2 or 3 or 4 or 6 or 8;
\item[(b)] 
$m_1=\dots=m_n=1$, $d=7$ or 14, $p\not\equiv 1\pmod{7}$;
\item[(c)] 
$m_1=\dots=m_n=1$, $k=2$, $d=2^t$, $p\equiv\pm 3\pmod{8}$ or $p\equiv 9\pmod{16}$;
\item[(d)] 
$m_1=\dots=m_n=2$, $k=1$, $p>2$, $d=1$ or 2 or 4;
\item[(e)] 
$m_1=\dots=m_n=2$, $k=1$, $p>2$, $d>1$ and $-1$ is a power of $p$ modulo~$2d$;
\item[(f)] 
$m_1=\dots=m_n=2$, $k=1$, $d=2^t$, $p\equiv\pm 3\pmod{8}$ or $p\equiv 9\pmod{16}$;
\item[(g)] 
$a_1=\dots=a_n=1$, $dD=1$ or 2 or $dD>2$ and $-1$ is a power of $p$ modulo~$dD$.
\end{itemize}

The case of arbitrary $k_1,\dots,k_n$ was recently considered in~\cite{PZC}. By using the augmented degree matrix and Gauss sums, the authors established the following: if 
$$
\gcd\Bigl(\,\sum_{j=1}^n \frac{k_jm_1\cdots m_n}{m_j}-km_1\cdots m_n,q-1\,\Bigr)=1,
$$
then $N_q=q^{n-1}+(-1)^{n-1}$. For a simple combinatorial proof of this latter result, see \cite{B5}.

The goal of this chapter is to find the explicit formulas for $N_q$ in some other cases. Our main results are Theorems~\ref{t1}--\ref{t4} in Sections~\ref{s4} and \ref{s5}. The results of numerical experiments are presented in Section~\ref{s6}.

\section{Preliminary lemmas}

Let $\psi$ be a multiplicative character on $\mathbb
F_q$ (we extend $\psi$ to all of $\mathbb F_q$ by setting
$\psi(0)=1$ if $\psi$ is trivial and $\psi(0)=0$ if $\psi$ is nontrivial).  Define the sum $T(\psi)$ corresponding to the character $\psi$ as
$$
T(\psi)=\frac 1{q-1}\sum_{\substack{x_1,\dots,x_n\in\mathbb
F_q^*\\a_1^{}x_1^{m_1}+\dots+a_n^{}x_n^{m_n}\ne 0}}\psi^{k_1}(x_1)\cdots\psi^{k_n}(x_n) \bar\psi^k(a_1^{}x_1^{m_1}+\dots+a_n^{}x_n^{m_n}).
$$
Let $N_q(0)$ and $N_q^*(0)$ be the number of solutions to the corresponding diagonal equation
\begin{equation}
\label{eq2}
a_1^{}x_1^{m_1}+\dots+a_n^{}x_n^{m_n}=0
\end{equation}
in $\mathbb F_q^n$ and $(\mathbb F_q^*)^n$, respectively. The following lemma relates $N_q$ to $N_q(0)$, $N_q^*(0)$ and $T(\psi)$.

\begin{lemma}
\label{l1}
If $b$ is not a $k_0$\emph{th} power in $\mathbb F_q$, then
$$
N_q^{}=N_q^{}(0)-N_q^*(0).
$$
If $b$ is a $k_0$\emph{th} power in $\mathbb F_q$, then
$$
N_q=k_0(q-1)^{n-1}+N_q(0)-\frac{k_0+q-1}{q-1}\,N_q^*(0)+\sum_{\psi^{k_0}\ne\varepsilon}\psi(b)T(\psi),
$$
where the summation is taken over all multiplicative characters $\psi$ on $\mathbb F_q$ of order not dividing $k_0$.
\end{lemma}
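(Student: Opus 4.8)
The plan is to partition the solution set of \eqref{eq1} according to whether some coordinate vanishes. Write $F=a_1^{}x_1^{m_1}+\dots+a_n^{}x_n^{m_n}$, so the equation reads $F^k=bx_1^{k_1}\cdots x_n^{k_n}$. If some $x_i=0$, then the right-hand side vanishes, so \eqref{eq1} reduces to $F^k=0$, i.e.\ $F=0$ since $\mathbb F_q$ has no zero divisors; conversely any solution of \eqref{eq2} with a zero coordinate solves \eqref{eq1}. Hence the solutions of \eqref{eq1} having at least one zero coordinate are exactly the solutions of \eqref{eq2} with a zero coordinate, and these number $N_q(0)-N_q^*(0)$. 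It remains to count the solutions $N_q^{**}$ lying in $(\mathbb F_q^*)^n$; for such a point the right-hand side is nonzero, forcing $F\ne 0$ as well, so $N_q=N_q(0)-N_q^*(0)+N_q^{**}$.

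To evaluate $N_q^{**}$ I would use orthogonality of multiplicative characters. Since both $F^k$ and $bx_1^{k_1}\cdots x_n^{k_n}$ are nonzero on the relevant range, \eqref{eq1} holds there iff $F^k(bx_1^{k_1}\cdots x_n^{k_n})^{-1}=1$, and $\frac{1}{q-1}\sum_\psi\psi(w)$ equals $1$ for $w=1$ and $0$ otherwise. Inserting this indicator, summing over $(x_1,\dots,x_n)\in(\mathbb F_q^*)^n$ with $F\ne 0$, and expanding $\psi$ multiplicatively gives an inner sum equal to $(q-1)\bar\psi(b)T(\bar\psi)$ directly from the definition of $T$; after reindexing $\psi\mapsto\bar\psi$ one obtains $N_q^{**}=\sum_\psi\psi(b)T(\psi)$, the sum running over all multiplicative characters $\psi$ on $\mathbb F_q$.

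For the first assertion, suppose $b$ is not a $k_0$th power. Writing each nonzero quantity as a power of a fixed generator $g$ of $\mathbb F_q^*$ (say $x_j=g^{e_j}$, $F=g^f$, $b=g^t$), equation \eqref{eq1} becomes the congruence $kf\equiv t+\sum_{j=1}^n k_je_j\pmod{q-1}$. Reducing this modulo $k_0$ and using $k_0\mid k$ and $k_0\mid k_j$ forces $t\equiv 0\pmod{k_0}$; since $k_0\mid q-1$, this says precisely that $b$ is a $k_0$th power, contrary to hypothesis. Hence $N_q^{**}=0$ and $N_q=N_q(0)-N_q^*(0)$, as claimed.

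For the second assertion, suppose $b$ is a $k_0$th power and split $N_q^{**}=\sum_\psi\psi(b)T(\psi)$ according to whether $\psi^{k_0}=\varepsilon$. The characters with $\psi^{k_0}=\varepsilon$ are the elements of order dividing $k_0$ in the cyclic character group of order $q-1$; as $k_0\mid q-1$ there are exactly $k_0$ of them, and for each such $\psi$ the divisibilities $k_0\mid k$, $k_0\mid k_j$ give $\psi^k=\psi^{k_j}=\varepsilon$, so $T(\psi)=\frac{1}{q-1}\bigl((q-1)^n-N_q^*(0)\bigr)$ is independent of $\psi$. By orthogonality $\sum_{\psi^{k_0}=\varepsilon}\psi(b)=k_0$ since $b$ is a $k_0$th power, so this part contributes $k_0(q-1)^{n-1}-\frac{k_0}{q-1}N_q^*(0)$. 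Adding $N_q(0)-N_q^*(0)$ and combining the two $N_q^*(0)$ terms into $-\frac{k_0+q-1}{q-1}N_q^*(0)$ yields the stated formula, with the remaining sum over $\psi^{k_0}\ne\varepsilon$ left untouched. The points needing the most care are the reindexing $\psi\mapsto\bar\psi$ in the orthogonality step and the uniform evaluation of $T(\psi)$ for $\psi$ of order dividing $k_0$ (this is exactly where the definition of $k_0$ as $\gcd(k,k_1,\dots,k_n,q-1)$ is used); the rest is bookkeeping.
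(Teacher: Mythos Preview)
Your proof is correct and follows essentially the same approach as the paper: split off the solutions with a zero coordinate (yielding $N_q(0)-N_q^*(0)$), then evaluate the contribution from $(\mathbb F_q^*)^n$ via orthogonality of multiplicative characters and separate the characters with $\psi^{k_0}=\varepsilon$. The only cosmetic differences are that you write the indicator as $\psi(F^k(bx_1^{k_1}\cdots x_n^{k_n})^{-1})$ and reindex $\psi\mapsto\bar\psi$, and that you spell out the discrete-log argument for $N_q^{**}=0$ where the paper simply asserts it.
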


\begin{proof}
Let $N_q^*$ be the number of solutions to \eqref{eq1} in $(\mathbb F_q^*)^n$. Note that the set of solutions to \eqref{eq1} in $\mathbb F_q^n\setminus(\mathbb F_q^*)^n$ is the same as the set of solutions to \eqref{eq2} in $\mathbb F_q^n\setminus(\mathbb F_q^*)^n$. This yields
\begin{equation}
\label{eq3}
N_q=N_q^*+N_q(0)-N_q^*(0).
\end{equation}
If $b$ is not a $k_0$th power in $\mathbb F_q$, it is easy to see that $N_q^*=0$, and so $N_q=N_q(0)-N_q^*(0)$. Suppose that $b$ is a $k_0$th power in $\mathbb F_q$. Let $x_1,\dots,x_n\in\mathbb F_q^*$ with $a_1^{}x_1^{m_1}+\dots+a_n^{}x_n^{m_n}\ne 0$. Then
\begin{align*}
\frac 1{q-1}&\sum_{\psi}\psi(b)\psi^{k_1}(x_1)\cdots \psi^{k_n}(x_n)\bar\psi^k(a_1^{}x_1^{m_1}+\dots+a_n^{}x_n^{m_n})\\
&=\begin{cases}
1&\text{if $(a_1^{}x_1^{m_1}+\dots+a_n^{}x_n^{m_n})^k=bx_1^{k_1}\cdots x_n^{k_n}$,}\\
0&\text{if $(a_1^{}x_1^{m_1}+\dots+a_n^{}x_n^{m_n})^k\ne bx_1^{k_1}\cdots x_n^{k_n}$,}
\end{cases}
\end{align*}
where the summation is taken over all multiplicative characters $\psi$ on $\mathbb F_q$. Hence
\begin{align*}
N_q^*&=\frac 1{q-1}\sum_{\substack{x_1,\dots,x_n\in\mathbb F_q^*\\a_1^{}x_1^{m_1}+\dots+a_n^{}x_n^{m_n}\ne 0}}\sum_{\psi}\psi(b)\psi^{k_1}(x_1)\cdots \psi^{k_n}(x_n)\bar\psi^k(a_1^{}x_1^{m_1}+\dots+a_n^{}x_n^{m_n})\\
&=\frac 1{q-1}\sum_{\psi^{k_0}=\varepsilon}\psi(b)\sum_{\substack{x_1,\dots,x_n\in\mathbb F_q^*\\a_1^{}x_1^{m_1}+\dots+a_n^{}x_n^{m_n}\ne 0}}1+\sum_{\psi^{k_0}\ne\varepsilon}\psi(b)T(\psi)\\
&=k_0\left((q-1)^{n-1}-\frac{N_q^*(0)}{q-1}\right)+\sum_{\psi^{k_0}\ne\varepsilon}\psi(b)T(\psi).
\end{align*}
Substituting this expression into \eqref{eq3}, we deduce the desired result. 
\end{proof}

Our next lemma shows that $T(\psi)$ vanishes when $\psi^d\ne\varepsilon$.

\begin{lemma}
\label{l2}
Let $\psi$ be a multiplicative character of order~$\delta$
on~$\mathbb F_q$. Suppose that $\delta\nmid d$. Then $T(\psi)=0$.
\end{lemma}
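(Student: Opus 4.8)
The plan is to exploit the invariance of $T(\psi)$ under suitable multiplicative substitutions on the variables $x_1,\dots,x_n$, each of which leaves the summation set unchanged while multiplying the summand by a fixed root of unity. If any such substitution produces a nontrivial factor, then $T(\psi)$ must vanish; conversely, requiring every factor to equal $1$ forces $\delta$ to divide each of the integers whose gcd defines $d$. I will argue by contraposition: assuming $T(\psi)\ne 0$, I will deduce $\delta\mid d$, contradicting the hypothesis.

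First, for $t\in\mathbb F_q^*$ consider the change of variables $x_j\mapsto t^{M/m_j}x_j$ (note $m_j\mid M$, so each exponent is an integer). This is a bijection of $(\mathbb F_q^*)^n$ onto itself, it sends $a_1^{}x_1^{m_1}+\dots+a_n^{}x_n^{m_n}$ to $t^M(a_1^{}x_1^{m_1}+\dots+a_n^{}x_n^{m_n})$, and hence preserves the condition that this quantity be nonzero. Tracking the character values, the product $\psi^{k_1}(x_1)\cdots\psi^{k_n}(x_n)\bar\psi^k(a_1^{}x_1^{m_1}+\dots+a_n^{}x_n^{m_n})$ is multiplied by $\psi^{\sum_j k_jM/m_j-kM}(t)$. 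Since the substitution merely permutes the terms of the sum, I obtain $T(\psi)=\psi^{\sum_j k_jM/m_j-kM}(t)\,T(\psi)$ for every $t\in\mathbb F_q^*$. If $T(\psi)\ne 0$, this forces $\psi^{\sum_j k_jM/m_j-kM}=\varepsilon$, that is, $\delta\mid \sum_j k_jM/m_j-kM$.

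Next, for each fixed index $i$ and each $t$ with $t^{m_i}=1$, consider the substitution $x_i\mapsto tx_i$, leaving the other variables alone. Because $t^{m_i}=1$, the argument $a_1^{}x_1^{m_1}+\dots+a_n^{}x_n^{m_n}$ is unchanged, so again the summation set is preserved, and the summand is multiplied only by $\psi^{k_i}(t)$. Thus $T(\psi)=\psi^{k_i}(t)\,T(\psi)$ for every $m_i$th root of unity $t$. These roots of unity form the unique cyclic subgroup of $\mathbb F_q^*$ of order $d_i=\gcd(m_i,q-1)$, generated by $g^{(q-1)/d_i}$ for a fixed generator $g$ of $\mathbb F_q^*$, and $\psi^{k_i}(t)=1$ for all of them precisely when $\psi^{k_i}(g^{(q-1)/d_i})=\psi(g)^{k_i(q-1)/d_i}=1$, i.e. when $\delta\mid k_i(q-1)/d_i$. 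Hence $T(\psi)\ne 0$ forces $\delta\mid k_i(q-1)/d_i$ for each $i=1,\dots,n$.

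Finally, combining these divisibilities with the obvious $\delta\mid q-1$ (as $\psi$ is a character of $\mathbb F_q^*$), I conclude that $\delta$ divides the gcd of $\sum_j k_jM/m_j-kM$, the numbers $k_i(q-1)/d_i$, and $q-1$, which is exactly $d$; so $\delta\mid d$, a contradiction. The only delicate point is the second step: identifying the set of $m_i$th roots of unity with the order-$d_i$ subgroup and pinning down exactly when $\psi^{k_i}$ is trivial on it, which is what introduces the terms $k_i(q-1)/d_i$ into $d$. The first step is comparatively routine, amounting to bookkeeping of exponents, and I expect no genuine obstacle beyond assembling the separate substitution constraints into the single gcd defining $d$.
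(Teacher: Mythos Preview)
Your argument is correct: the two families of substitutions $x_j\mapsto t^{M/m_j}x_j$ (all $j$ simultaneously, $t\in\mathbb F_q^*$) and $x_i\mapsto tx_i$ ($t$ an $m_i$th root of unity) are exactly what is needed, and your bookkeeping of the resulting character values is accurate. This is essentially the same method as in the paper, which simply cites \cite[Lemma~3.2]{B2}; that lemma treats the special case $k_1=\dots=k_n=1$ by the same substitution technique, and your proof is the natural extension to general $k_1,\dots,k_n$.
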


\begin{proof}
It is analogous to that of \cite[Lemma~3.2]{B2}.
\end{proof}

Combining Lemmas \ref{l1} and \ref{l2}, we obtain

\begin{corollary}
\label{c1}
If $b$ is a $k_0$\emph{th} power in $\mathbb F_q$, then
$$
N_q=k_0(q-1)^{n-1}+N_q(0)-\frac{k_0+q-1}{q-1}\,N_q^*(0)+\sum_{\substack{\psi^d=\varepsilon\\ \psi^{k_0}\ne\varepsilon}}\psi(b)T(\psi),
$$
where the summation is taken over all multiplicative characters $\psi$ on $\mathbb F_q$ of order dividing $d$ but not $k_0$.
\end{corollary}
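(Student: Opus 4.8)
The plan is to derive the corollary by feeding the vanishing result of Lemma~\ref{l2} into the second formula of Lemma~\ref{l1}, so that no genuinely new computation is required. The only formula I need is the one valid when $b$ is a $k_0$th power, which already expresses $N_q$ in terms of $N_q(0)$, $N_q^*(0)$, and the character sum $\sum_{\psi^{k_0}\ne\varepsilon}\psi(b)T(\psi)$. Every term outside this sum is reproduced verbatim, so the entire task reduces to rewriting the summation range.

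First I would translate the two index conditions into statements about the order $\delta$ of the character $\psi$: the condition $\psi^{k_0}\ne\varepsilon$ appearing in Lemma~\ref{l1} is equivalent to $\delta\nmid k_0$, and the condition $\psi^d=\varepsilon$ appearing in the corollary is equivalent to $\delta\mid d$. With this dictionary in place, I would invoke Lemma~\ref{l2}: for any $\psi$ with $\delta\nmid d$ one has $T(\psi)=0$, so every such character contributes nothing to the sum and may be deleted from the range without altering its value. The characters that survive are exactly those with $\delta\nmid k_0$ and $\delta\mid d$, that is, those with $\psi^{k_0}\ne\varepsilon$ and $\psi^d=\varepsilon$.

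Thus I would conclude
$$
\sum_{\psi^{k_0}\ne\varepsilon}\psi(b)T(\psi)=\sum_{\substack{\psi^d=\varepsilon\\\psi^{k_0}\ne\varepsilon}}\psi(b)T(\psi),
$$
and substituting this identity into the formula of Lemma~\ref{l1} gives precisely the stated expression for $N_q$. There is no real obstacle here; the only point that warrants a moment's care is the bookkeeping of the two divisibility conditions. The observation $k_0\mid d$ recorded in the introduction is reassuring in this regard: it guarantees that $\psi^{k_0}=\varepsilon$ forces $\psi^d=\varepsilon$, so the surviving characters are exactly those of order dividing $d$ but not $k_0$, matching the description in the statement.
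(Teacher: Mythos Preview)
Your proposal is correct and matches the paper's approach exactly: the paper simply states that the corollary follows by combining Lemmas~\ref{l1} and~\ref{l2}, and your write-up spells out precisely this combination. No additional ideas are needed.
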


\section{Formulas for the number of solutions to diagonal equations}

\begin{lemma}
\label{l3}
Assume that $d_1,\dots,d_t$ are odd, $d_{t+1},\dots,d_n$ are even, $d_1,\dots,d_t$, $d_{t+1}/2,\ldots,$ $d_n/2$ are pairwise coprime, $0\le t\le n$. Then
$$
N_q(0)=q^{n-1}+\begin{cases}
\eta((-1)^{n/2}a_1\cdots a_n)q^{(n-2)/2}(q-1)&\text{if $t=0$ and $n$ is even,}\\
0&\text{otherwise,}
\end{cases}
$$
and
\begin{align*}
N_q^*(0)=\,&\frac{(q-1)^n+(-1)^n(q-1)}q\\
\label{eq6}
&+\begin{cases}
(-1)^n(q-1)\sum\limits_{
j=1}^{[(n-t)/2]}
\eta((-1)^j)\sigma_{2j}(\eta(a_{t+1}),\dots,\eta(a_n))q^{j-1}&\text{if $t<n$},\\
0&\text{if $t=n$},
\end{cases}
\end{align*}
where $\sigma_{2j}(z_1,\dots,z_{n-t})$ are the elementary symmetric polynomials.
\end{lemma}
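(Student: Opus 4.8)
The plan is to expand both $N_q(0)$ and $N_q^*(0)$ in additive and multiplicative characters, reduce them to sums of products of Gauss sums, and then use the coprimality hypotheses to show that the quadratic character $\eta$ is the only nontrivial character that can contribute.

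First, fix a nontrivial additive character $\lambda$ of $\mathbb F_q$ and write $G(\psi)=\sum_{x\in\mathbb F_q^*}\psi(x)\lambda(x)$ for the Gauss sum of a multiplicative character $\psi$. Since the number of $x\in\mathbb F_q$ with $x^{m_j}=u$ depends only on $d_j=\gcd(m_j,q-1)$, I would use the standard identity $\sum_{x\in\mathbb F_q}\lambda(cx^{m_j})=\sum_{\psi\ne\varepsilon,\;\psi^{d_j}=\varepsilon}\bar\psi(c)G(\psi)$ for $c\ne 0$. Starting from $N_q(0)=q^{-1}\sum_{t\in\mathbb F_q}\sum_{x\in\mathbb F_q^n}\lambda\bigl(t\sum_j a_jx_j^{m_j}\bigr)$, separating $t=0$ and carrying out the sum over $t\ne 0$ (which contributes a factor $q-1$ exactly when $\psi_1\cdots\psi_n=\varepsilon$), I would obtain
$$
N_q(0)=q^{n-1}+\frac{q-1}{q}\sum_{\substack{\psi_j\ne\varepsilon,\ \psi_j^{d_j}=\varepsilon\ (1\le j\le n)\\ \psi_1\cdots\psi_n=\varepsilon}}\prod_{j=1}^n\bar\psi_j(a_j)G(\psi_j).
$$
The analogous treatment of $N_q^*(0)$, with each $x_j$ restricted to $\mathbb F_q^*$, yields the main term $((q-1)^n+(-1)^n(q-1))/q$ together with a sum over nonempty index sets $S$ carrying the nontrivial characters and satisfying $\prod_{j\in S}\psi_j=\varepsilon$, each such $S$ contributing $\tfrac{(-1)^{n-|S|}(q-1)}{q}\prod_{j\in S}\bar\psi_j(a_j)G(\psi_j)$.

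The heart of the argument, and the step I expect to be the main obstacle, is to determine exactly which character tuples survive. Writing the character group as $\mathbb Z/(q-1)$ and splitting into Sylow components, I would argue that the pairwise coprimality of $d_1,\dots,d_t,d_{t+1}/2,\dots,d_n/2$ forces, for every odd prime $\ell$, at most one index $j$ with $\ell\mid d_j$; hence $\prod_j\psi_j=\varepsilon$ annihilates all odd parts and each contributing $\psi_j$ has $2$-power order. Because $d_1,\dots,d_t$ are odd, the indices $1,\dots,t$ must then carry the trivial character, and because at most one $d_j/2$ is even, all but possibly one of the remaining $\psi_j$ are forced to equal $\eta$; the relation $\prod\psi_j=\varepsilon$ then pins the last one to $\eta$ as well and forces the number of nontrivial factors to be even. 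Making this reduction rigorous is the crucial point where every hypothesis is used.

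Finally I would assemble the formulas. For $N_q(0)$ all $n$ characters must be nontrivial, so a surviving tuple exists only when $t=0$ and $n$ is even, in which case it is unique with every $\psi_j=\eta$; using $\prod_j\eta(a_j)=\eta(a_1\cdots a_n)$ and the well-known evaluation $G(\eta)^2=\eta(-1)q$, so that $G(\eta)^n=\eta((-1)^{n/2})q^{n/2}$, gives the claimed value, and $q^{n-1}$ in all other cases. For $N_q^*(0)$ the surviving sets $S$ are precisely the even-sized subsets of $\{t+1,\dots,n\}$ with $\psi_j=\eta$ on $S$; summing $\prod_{j\in S}\eta(a_j)$ over all $S$ with $|S|=2j$ produces $\sigma_{2j}(\eta(a_{t+1}),\dots,\eta(a_n))$, while $G(\eta)^{2j}=\eta((-1)^j)q^j$ and $(-1)^{n-2j}=(-1)^n$ combine to give the stated formula after the simplification $\tfrac{q-1}{q}q^{j}=(q-1)q^{j-1}$.
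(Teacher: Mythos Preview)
Your argument is correct: the additive--multiplicative character expansion, the reduction of the surviving character tuples via the coprimality hypothesis, and the final evaluation using $G(\eta)^2=\eta(-1)q$ are all sound, and they yield exactly the stated formulas for $N_q(0)$ and $N_q^*(0)$.

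Note, however, that the paper does not give its own proof of this lemma at all; it simply writes ``See Theorem~2 in \cite{J}, Theorem~2 in \cite{SY} and the proof of Theorem~3 in \cite{B1}.'' Your outline is precisely the standard Gauss--Jacobi sum computation that underlies those references, so what you have supplied is a self-contained version of the argument the paper merely cites. The one place where you should tighten the wording is the phrase ``all but possibly one of the remaining $\psi_j$ are forced to equal $\eta$'': strictly speaking they are forced to have order dividing~$2$, hence to lie in $\{\varepsilon,\eta\}$, and it is only after you restrict to indices in $S$ (where the $\psi_j$ are nontrivial by construction) that $\eta$ is the unique possibility. With that clarification the reduction step is complete.
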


\begin{proof}
See Theorem~2 in \cite{J}, Theorem~2 in \cite{SY} and the proof of Theorem~3 in \cite{B1}.
\end{proof}

For positive integers $v_1,\dots,v_r$ let
$I(v_1,\dots,v_r)$ denote the number of $r$-tuples\linebreak
$(j_1,\dots,j_r)$ of integers with $1\le j_t\le v_t-1$ $(1\le
t\le r)$ such that $(j_1/v_1)+\dots+(j_r/v_r)$ is an integer.

\begin{lemma}
\label{l4}
Assume that $a_1=\dots=a_n=1$, $D>2$ and there exists a positive integer $\ell$ such that $D\mid(p^{\ell}+1)$, with $\ell$ chosen minimal. Then $2\ell\mid s$,
$$
N_q(0)=q^{n-1}+(-1)^{((s/2\ell)-1)n}q^{(n-2)/2}(q-1)I(d_1,\dots,d_n),
$$
and
\begin{align*}
N_q^*(0)=\,&\frac{(q-1)^n+(-1)^n(q-1)}q\\
&+(-1)^n(q-1)\sum_{r=2}^n(-1)^{rs/2\ell}q^{(r-2)/2}\sum_{1\le j_1<\dots<j_r\le n}
I(d_{j_1},\ldots,d_{j_r}).
\end{align*}
\end{lemma}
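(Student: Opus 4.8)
The plan is to compute both $N_q(0)$ and $N_q^*(0)$ using Gauss sums and Jacobi sums, exploiting the crucial number-theoretic hypothesis that $D \mid (p^\ell+1)$ with $\ell$ minimal, which forces a very rigid structure on the relevant characters and Gauss sums. First I would observe that since $D \mid (p^\ell + 1)$, each $d_j \mid (p^\ell + 1)$, so the characters of order dividing $d_j$ are all ``semi-primitive'' (their orders divide $p^\ell + 1$). The minimality of $\ell$ together with the standard theory of Gauss sums for semi-primitive characters (the Stickelberger/Baumert–McEliece type evaluation) shows that for any nontrivial character $\chi$ of order dividing $D$, the Gauss sum $g(\chi)$ has the clean value $(-1)^{s/2\ell - 1}\sqrt{q}\,\chi(-1)^{\text{something}}$ up to a controlled sign, and in particular $|g(\chi)| = \sqrt q$ with $2\ell \mid s$ being exactly the condition that $\chi$ takes values compatible with being defined over $\mathbb F_q$. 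Establishing $2\ell \mid s$ and the precise Gauss-sum evaluation is the technical heart, and I expect this to follow from the cited earlier work (this is precisely the mechanism behind cases (e) and (g) in the introduction); I would invoke the semi-primitive Gauss sum formula as a known lemma rather than rederiving it.

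Next I would set up the standard character-sum expansion for diagonal equations. Writing the number of solutions to $x^{m_j} = c$ in terms of characters, one gets
$$
N_q(0) = q^{-1}\sum_{\chi_1^{d_1}=\varepsilon}\cdots\sum_{\chi_n^{d_n}=\varepsilon} \; (\text{Jacobi-sum type term}),
$$
where the main term $q^{n-1}$ comes from all $\chi_j$ trivial, and the remaining contributions come from tuples where the product $\chi_1\cdots\chi_n$ is trivial (so that the equation $\sum a_j x_j^{m_j}=0$ contributes a genuine Jacobi sum $J(\chi_1,\dots,\chi_n)$). Since $a_1=\dots=a_n=1$, the character values $\chi_j(a_j)$ all disappear, which is what kills the $\eta(a_1\cdots a_n)$-type factors present in Lemma~\ref{l3} and replaces them by a pure counting quantity. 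The combinatorial object $I(d_1,\dots,d_n)$ enters precisely as the count of nontrivial character tuples $(\chi_1,\dots,\chi_n)$ with $\chi_j$ of order dividing $d_j$, each $\chi_j$ nontrivial, and $\chi_1\cdots\chi_n=\varepsilon$: such a tuple is encoded by $(j_1,\dots,j_n)$ with $1\le j_t\le d_t-1$ and $\sum j_t/d_t \in \mathbb Z$.

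The decisive simplification is that under the semi-primitive hypothesis every nontrivial Jacobi sum of this type collapses to a power of $q^{1/2}$ times a uniform sign. Using the relation $J(\chi_1,\dots,\chi_n) = g(\chi_1)\cdots g(\chi_n)/g(\chi_1\cdots\chi_n)$ when $\chi_1\cdots\chi_n$ is nontrivial, and the analogous boundary formula when it is trivial, each factor $g(\chi_j)$ contributes the same $(-1)^{s/2\ell-1}q^{1/2}$ up to roots of unity that cancel because $a_j=1$ and because the semi-primitive Gauss sums are real (hence sign-valued after normalization). This makes every surviving Jacobi sum equal to $(-1)^{(s/2\ell -1)n}q^{(n-2)/2}(q-1)$ aggregated over the $I(d_1,\dots,d_n)$ admissible tuples, giving the stated $N_q(0)$. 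The main obstacle will be bookkeeping the signs: tracking exactly how many factors of $(-1)^{s/2\ell - 1}$ appear, confirming they consolidate into the exponent $((s/2\ell)-1)n$ for $N_q(0)$ and into $rs/2\ell$ inside each $r$-fold sum for $N_q^*(0)$, and verifying that $\chi_j(-1)$ and $g(\chi_1\cdots\chi_n)$ contributions cancel so that no residual character values survive.

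Finally, for $N_q^*(0)$ I would run the same expansion but restrict to $(\mathbb F_q^*)^n$, so that the inclusion–exclusion over which variables vanish produces the leading term $((q-1)^n + (-1)^n(q-1))/q$ (the count when all nontrivial characters are absent) plus, for each $r$ from $2$ to $n$, the contribution of exactly $r$ nonzero coordinates bearing nontrivial characters summed against the constraint $\chi_{j_1}\cdots\chi_{j_r}=\varepsilon$. Each such block contributes $I(d_{j_1},\dots,d_{j_r})$ admissible tuples weighted by the product of $r$ semi-primitive Gauss sums, which evaluates to $(-1)^{rs/2\ell}q^{(r-2)/2}$ after dividing by $g(\chi_{j_1}\cdots\chi_{j_r})=g(\varepsilon)$ on the constraint locus; summing over all $r$-subsets $\{j_1<\dots<j_r\}$ and over $r$ yields exactly the displayed double sum. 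Throughout I would lean on Lemma~\ref{l3}'s proof technique and the references \cite{J,SY,B1} for the diagonal-equation machinery, reserving original effort for the uniform Gauss-sum evaluation and the sign accounting, which is where the real difficulty lies.
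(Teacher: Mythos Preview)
The paper's own ``proof'' of this lemma is a bare citation: ``See Lemma~4.1 and the proof of Theorem~1.1 in \cite{B2}.'' Your proposal is a faithful and correct reconstruction of what that cited argument actually contains: the semi-primitive Gauss-sum evaluation under $D\mid(p^\ell+1)$ (yielding $2\ell\mid s$ and $g(\chi)=\pm q^{1/2}$ with a uniform sign), the character-sum expansion for diagonal equations, the identification of $I(d_1,\dots,d_n)$ with the number of nontrivial character tuples whose product is trivial, and the inclusion--exclusion over subsets of coordinates for $N_q^*(0)$. So you take essentially the same route as the cited source, just spelled out rather than referenced.
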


\begin{proof}
See Lemma 4.1 and the proof of Theorem 1.1 in \cite{B2}.
\end{proof}

It is known (see \cite[Proposition~6.17]{S}) that
$$
I(\,\underbrace{v,\dots,v}_r\,)=\frac{(v-1)^r+(-1)^r(v-1)}v.
$$
Therefore we have the following corollary (the first part of this corollary is due to Wolfmann~\cite[Corollary~4]{W}).

\begin{corollary}
\label{c2}
Assume that $d_1=\dots=d_n=D$. Under the conditions of Lemma~$\ref{l4}$, we have
$$
N_q(0)=q^{n-1}+(-1)^{((s/2\ell)-1)n}q^{(n-2)/2}(q-1)\cdot\frac{(D-1)^n+(-1)^n(D-1)}D
$$
and
\begin{align*}
N_q^*(0)=\,&\frac{(q-1)^n+(-1)^n(q-1)}q\\
&+(-1)^n(q-1)\sum_{r=2}^n(-1)^{rs/2\ell}q^{(r-2)/2}\binom nr \cdot\frac{(D-1)^r+(-1)^r(D-1)}D.
\end{align*}
\end{corollary}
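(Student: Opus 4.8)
The plan is to specialize the two formulas of Lemma~\ref{l4} to the case $d_1=\dots=d_n=D$ and then to replace every quantity $I(\cdot)$ appearing there by its closed-form value for equal arguments. The hypotheses of the corollary explicitly include $d_1=\dots=d_n=D$ together with all conditions of Lemma~\ref{l4} (namely $a_1=\dots=a_n=1$, $D>2$, and the existence of a minimal $\ell$ with $D\mid(p^\ell+1)$), so both conclusions of Lemma~\ref{l4} are available verbatim; in particular $2\ell\mid s$, and the exponents $(-1)^{((s/2\ell)-1)n}$ and $(-1)^{rs/2\ell}$ are carried over unchanged by the specialization.

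First I would treat $N_q(0)$. The only place where the $d_j$ enter is through the single factor $I(d_1,\dots,d_n)$. With $d_1=\dots=d_n=D$ this becomes $I(\underbrace{D,\dots,D}_n)$, which by the quoted identity of \cite[Proposition~6.17]{S} equals $\frac{(D-1)^n+(-1)^n(D-1)}{D}$. Substituting this into the expression for $N_q(0)$ in Lemma~\ref{l4} gives the first displayed formula of the corollary directly.

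Next I would handle $N_q^*(0)$. Here the $d_j$ enter through the inner sums $\sum_{1\le j_1<\dots<j_r\le n} I(d_{j_1},\dots,d_{j_r})$. The key observation is that when $d_1=\dots=d_n=D$, every summand equals $I(\underbrace{D,\dots,D}_r)=\frac{(D-1)^r+(-1)^r(D-1)}{D}$, a value that no longer depends on the particular choice of indices $j_1<\dots<j_r$. Consequently the inner sum collapses to the number of such $r$-subsets times this common value, i.e.\ to $\binom{n}{r}\cdot\frac{(D-1)^r+(-1)^r(D-1)}{D}$. Feeding this back into the formula of Lemma~\ref{l4} yields the second displayed formula.

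Since the derivation is a pure substitution, there is essentially no analytic obstacle; the only points requiring care are bookkeeping ones. I would verify that the identity for $I(\underbrace{v,\dots,v}_r)$ applies in the regime $v=D>2$ (it does, and it even degenerates correctly to $0$ at $v=1$), and that the parity exponents depending on $s/2\ell$ are untouched by the collapse of the subset sums. The half of the statement concerning $N_q(0)$ is exactly Wolfmann's result~\cite[Corollary~4]{W}, which the above recovers as a special case.
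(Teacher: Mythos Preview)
Your proposal is correct and follows exactly the same approach as the paper: substitute $d_1=\dots=d_n=D$ into Lemma~\ref{l4}, replace each $I(\underbrace{D,\dots,D}_r)$ by the closed form $\frac{(D-1)^r+(-1)^r(D-1)}{D}$ from \cite[Proposition~6.17]{S}, and observe that the inner sum over $r$-subsets collapses to a binomial coefficient. The paper likewise attributes the $N_q(0)$ half to Wolfmann~\cite[Corollary~4]{W}.
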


In the general case
\begin{align*}
I(v_1,\dots,v_r)&=(-1)^r+\sum_{t=1}^r (-1)^{r-t}\sum_{1\le j_1<\dots<j_t\le r}
\frac{v_{j_1}\cdots v_{j_t}}{\text{\rm lcm}[v_{j_1},\dots,v_{j_t}]}\\
&=\frac{(-1)^r}{v_1\cdots v_r}\sum_{t=0}^{v_1\cdots v_r-1}\prod_{\substack{j=1\\v_j\mid t}}^r(1-v_j)
\end{align*}
(see~\cite[Equation~(6.12)]{LN}, \cite[Theorem~1]{SWM}, \cite[Corollary~2.2]{SZW}, and \cite[Theorem~6.18]{S}, respectively).

\section{The case when $b$ is not a $k_0$th power in $\mathbb F_q$}
\label{s4}

Combining Lemmas~\ref{l1}, \ref{l3}, \ref{l4} and Corollary~\ref{c2}, we obtain the following theorems.

\begin{theorem}
\label{t1}
Assume that $d_1,\dots,d_t$ are odd, $d_{t+1},\dots,d_n$ are even, $d_1,\dots,d_t$, $d_{t+1}/2,\ldots,$ $d_n/2$ are pairwise coprime, $0\le t\le n$, and $b$ is not a $k_0$\emph{th} power in $\mathbb F_q$. If $t=0$ and $n$ is even, then
$$
N_q=q^{n-1}-\frac{(q-1)^n+q-1}q-(q-1)\sum_{j=1}^{(n-2)/2}
\eta((-1)^j)\sigma_{2j}(\eta(a_1),\dots,\eta(a_n))q^{j-1};
$$
if $t=n$, then
$$
N_q=q^{n-1}-\frac{(q-1)^n+(-1)^n(q-1)}q;
$$
otherwise
\begin{align*}
N_q=\,&q^{n-1}-\frac{(q-1)^n+(-1)^n(q-1)}q\\
&-(-1)^n (q-1)\sum_{
j=1}^{[(n-t)/2]}
\eta((-1)^j)\sigma_{2j}(\eta(a_{t+1}),\dots,\eta(a_n))q^{j-1}.
\end{align*}
\end{theorem}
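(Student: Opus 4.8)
The plan is to reduce everything to the first formula of Lemma~\ref{l1}. Since $b$ is assumed not to be a $k_0$th power in $\mathbb F_q$, that lemma gives at once
$$
N_q=N_q(0)-N_q^*(0),
$$
so the character sums $T(\psi)$ never enter and the whole argument becomes a substitution of the two diagonal counts furnished by Lemma~\ref{l3}, followed by algebraic simplification. The hypotheses on $d_1,\dots,d_n$ in the theorem are exactly those of Lemma~\ref{l3}, so both $N_q(0)$ and $N_q^*(0)$ are available in closed form, and the three cases of the theorem match the three branches of those formulas.

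For the case $t=n$ I would note that $n\geqslant 2$ forces the ``otherwise'' branch of the $N_q(0)$ formula, giving $N_q(0)=q^{n-1}$, while the $t=n$ branch of $N_q^*(0)$ has vanishing sum; subtracting yields the stated $q^{n-1}-((q-1)^n+(-1)^n(q-1))/q$ with no further work. The ``otherwise'' case of the theorem (that is, $0<t<n$, or $t=0$ with $n$ odd) is handled the same way: here $N_q(0)=q^{n-1}$ again, and subtracting the full expression for $N_q^*(0)$ reproduces the displayed formula term for term, the only care being to track the sign $(-1)^n$ and the upper limit $[(n-t)/2]$ of the sum.

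The one step requiring a genuine manipulation is the case $t=0$ with $n$ even, where $N_q(0)$ carries the extra summand $\eta((-1)^{n/2}a_1\cdots a_n)q^{(n-2)/2}(q-1)$. Here I would isolate the top-degree term $j=n/2$ of the sum in $N_q^*(0)$: since $\sigma_n(\eta(a_1),\dots,\eta(a_n))=\eta(a_1)\cdots\eta(a_n)=\eta(a_1\cdots a_n)$ by multiplicativity of $\eta$, and $\eta((-1)^{n/2})\eta(a_1\cdots a_n)=\eta((-1)^{n/2}a_1\cdots a_n)$, this term equals precisely $(q-1)\eta((-1)^{n/2}a_1\cdots a_n)q^{(n-2)/2}$ (using $(-1)^n=1$). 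It therefore cancels the extra summand inherited from $N_q(0)$, and the surviving sum runs only up to $j=(n-2)/2$, which is exactly the claimed formula. This cancellation is the single point to verify with attention; everything else is direct substitution, so I expect no real obstacle beyond the bookkeeping of signs and summation ranges across the three cases.
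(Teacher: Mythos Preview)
Your proposal is correct and follows exactly the paper's approach: the paper proves Theorem~\ref{t1} simply by ``combining Lemmas~\ref{l1}, \ref{l3}'', i.e., by substituting the closed forms for $N_q(0)$ and $N_q^*(0)$ from Lemma~\ref{l3} into the identity $N_q=N_q(0)-N_q^*(0)$ from Lemma~\ref{l1}. Your write-up in fact supplies more detail than the paper, including the cancellation of the $j=n/2$ term against the extra summand in $N_q(0)$ in the $t=0$, $n$ even case, which is precisely the only nontrivial manipulation.
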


\begin{theorem}
\label{t2}
Assume that $a_1=\dots=a_n=1$, $b$ is not a $k_0$\emph{th} power in $\mathbb F_q$, $D>2$ and there exists a positive integer $\ell$ such that $D\mid(p^{\ell}+1)$, with $\ell$ chosen minimal. Then $2\ell\mid s$ and
\begin{align*}
N_q=\,&q^{n-1}-\frac{(q-1)^n+(-1)^n(q-1)}q\\
&-(-1)^n(q-1)\sum_{r=2}^{n-1}(-1)^{rs/2\ell}q^{(r-2)/2}\sum_{1\le j_1<\dots<j_r\le n}
I(d_{j_1},\ldots,d_{j_r}).
\end{align*}
In particular, if $d_1=\dots=d_n=D$, then
\begin{align*}
N_q=\,&q^{n-1}-\frac{(q-1)^n+(-1)^n(q-1)}q\\
&-(-1)^n(q-1)\sum_{r=2}^{n-1}(-1)^{rs/2\ell}q^{(r-2)/2}\binom nr \cdot\frac{(D-1)^r+(-1)^r(D-1)}D.
\end{align*}
\end{theorem}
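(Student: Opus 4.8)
The plan is to invoke the first alternative of Lemma~\ref{l1}. Since $b$ is not a $k_0$th power in $\mathbb F_q$, that lemma collapses to the clean identity $N_q=N_q(0)-N_q^*(0)$, so no character sums $T(\psi)$ survive and the whole problem reduces to evaluating the two diagonal counts. Under the standing hypotheses $a_1=\dots=a_n=1$, $D>2$, and $D\mid(p^\ell+1)$ with $\ell$ minimal, Lemma~\ref{l4} applies verbatim; it simultaneously delivers $2\ell\mid s$ and explicit expressions for $N_q(0)$ and $N_q^*(0)$ in terms of the integers $I(d_{j_1},\dots,d_{j_r})$. I would therefore simply substitute those two expressions into $N_q=N_q(0)-N_q^*(0)$ and simplify.

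The one nontrivial simplification, and the step I expect to require the most care, is a cancellation between the correction term in $N_q(0)$ and the top ($r=n$) term in $N_q^*(0)$. Writing $m=s/(2\ell)$ for brevity, note first that the inner sum $\sum_{1\le j_1<\dots<j_n\le n}I(d_{j_1},\dots,d_{j_n})$ in $N_q^*(0)$ ranges over the single subset $\{1,\dots,n\}$ and hence equals $I(d_1,\dots,d_n)$. Thus the $r=n$ contribution to $N_q^*(0)$ is $(-1)^n(q-1)(-1)^{nm}q^{(n-2)/2}I(d_1,\dots,d_n)$, while the correction term in $N_q(0)$ is $(-1)^{(m-1)n}q^{(n-2)/2}(q-1)I(d_1,\dots,d_n)$. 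The parity bookkeeping $(-1)^{(m-1)n}=(-1)^{mn-n}=(-1)^{mn+n}=(-1)^{n+nm}$ shows the two coefficients are identical. Because $N_q(0)$ enters $N_q=N_q(0)-N_q^*(0)$ with a plus sign and $N_q^*(0)$ with a minus sign, these two contributions cancel exactly, so the surviving sum runs only over $r=2,\dots,n-1$. This yields the first displayed formula of the theorem.

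For the special case $d_1=\dots=d_n=D$, I would substitute the closed form $I(\underbrace{D,\dots,D}_r)=((D-1)^r+(-1)^r(D-1))/D$ recorded just before Corollary~\ref{c2}. Since this value depends only on $r$ and not on which $r$-element subset is chosen, and since there are $\binom nr$ such subsets, each inner sum becomes $\binom nr((D-1)^r+(-1)^r(D-1))/D$, which produces the second displayed formula. Everything past the cancellation step is pure substitution, so I expect the parity check above to be the only genuine obstacle.
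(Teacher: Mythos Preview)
Your proposal is correct and follows exactly the route the paper takes: Section~\ref{s4} states that Theorems~\ref{t1} and~\ref{t2} arise ``combining Lemmas~\ref{l1}, \ref{l3}, \ref{l4} and Corollary~\ref{c2},'' which for Theorem~\ref{t2} means precisely Lemma~\ref{l1}'s first case together with Lemma~\ref{l4} (and Corollary~\ref{c2} for the special case). Your explicit parity check showing that the $r=n$ term of $N_q^*(0)$ cancels the correction term in $N_q(0)$ is the only computation hidden in the paper's one-line proof, and you have carried it out correctly.
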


\section{The case when $b$ is a $k_0$th power in $\mathbb F_q$}
\label{s5}

If $b$ is a $k_0$th power in $\mathbb F_q$, then, in general, one needs to evaluate the sums $T(\psi)$. However, if
\begin{equation}
\label{eq4}
\gcd\Bigl(\,\sum_{j=1}^n\frac {k_jM}{k_0m_j}-\frac{kM}{k_0},\frac{k_1(q-1)}{k_0d_1},\dots,\frac{k_n(q-1)}{k_0d_n},\frac{q-1}{k_0}\Bigr)=1,
\end{equation}
that is, if $k_0=d$, then Corollary~\ref{c1} yields
$$
N_q=k_0(q-1)^{n-1}+N_q(0)-\frac{k_0+q-1}{q-1}\,N_q^*(0).
$$
Combining the latter equality with Lemmas~\ref{l3} and \ref{l4} and Corollary~\ref{c2}, we deduce the following theorems.

\begin{theorem}
\label{t3}
Assume that $d_1,\dots,d_t$ are odd, $d_{t+1},\dots,d_n$ are even, $d_1,\dots,d_t$, $d_{t+1}/2,\ldots,$ $d_n/2$ are pairwise coprime, $0\le t\le n$, and $b$ is a $k_0$\emph{th} power in $\mathbb F_q$. Assume in addition that \eqref{eq4} holds. If $t=0$ and $n$ is even, then
\begin{align*}
N_q=\,&q^{n-1}-1+\frac{(k_0-1)\bigl((q-1)^n-1\bigr)}q-k_0\eta((-1)^{n/2}a_1\cdots a_n)q^{(n-2)/2}\\
&-(k_0+q-1)\sum_{j=1}^{(n-2)/2}
\eta((-1)^j)\sigma_{2j}(\eta(a_1),\dots,\eta(a_n))q^{j-1};
\end{align*}
if $t=n$, then
$$
N_q=q^{n-1}-(-1)^n+\frac{(k_0-1)\bigl((q-1)^n-(-1)^n\bigr)}q;
$$
otherwise
\begin{align*}
N_q=\,&q^{n-1}-(-1)^n+\frac{(k_0-1)\bigl((q-1)^n-(-1)^n\bigr)}q\\
&-(-1)^n(k_0+q-1)\sum_{
j=1}^{[(n-t)/2]}
\eta((-1)^j)\sigma_{2j}(\eta(a_{t+1}),\dots,\eta(a_n))q^{j-1}.
\end{align*}
\end{theorem}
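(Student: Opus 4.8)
The plan is to start from the reduced formula displayed immediately before the statement. Since $b$ is a $k_0$th power and \eqref{eq4} holds, we have $k_0=d$, and therefore there is no multiplicative character $\psi$ satisfying $\psi^d=\varepsilon$ together with $\psi^{k_0}\ne\varepsilon$. Hence the character sum in Corollary~\ref{c1} is empty, and
$$
N_q=k_0(q-1)^{n-1}+N_q(0)-\frac{k_0+q-1}{q-1}\,N_q^*(0).
$$
The hypotheses imposed on $d_1,\dots,d_n$ are exactly those of Lemma~\ref{l3}, so the entire remaining task is to substitute the expressions for $N_q(0)$ and $N_q^*(0)$ furnished by that lemma and to simplify in each of the three cases.

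First I would dispose of the part common to all three formulas. The simplification that drives everything is that the leading term of $N_q^*(0)$ factors as
$$
\frac{(q-1)^n+(-1)^n(q-1)}q=\frac{(q-1)\bigl((q-1)^{n-1}+(-1)^n\bigr)}q,
$$
so the factor $q-1$ cancels the denominator of $\tfrac{k_0+q-1}{q-1}$. Expanding $k_0(q-1)^{n-1}+q^{n-1}-\tfrac{(k_0+q-1)\bigl((q-1)^{n-1}+(-1)^n\bigr)}q$ and collecting terms, I expect to reach
$$
q^{n-1}-(-1)^n+\frac{(k_0-1)\bigl((q-1)^n-(-1)^n\bigr)}q,
$$
which is precisely the leading portion shared by all three displayed formulas. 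This already settles the case $t=n$ outright, since there $N_q(0)=q^{n-1}$ and $N_q^*(0)$ carries no symmetric-polynomial correction.

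Next I would bring in the correction terms. In the cases $0<t<n$ and $t=0$ with $n$ even, $N_q^*(0)$ contributes the extra summand $(-1)^n(q-1)\sum_j\eta((-1)^j)\sigma_{2j}(\dots)q^{j-1}$; the factor $q-1$ again cancels against $\tfrac{k_0+q-1}{q-1}$, leaving $-(-1)^n(k_0+q-1)\sum_j\eta((-1)^j)\sigma_{2j}(\dots)q^{j-1}$, which reproduces the otherwise-formula. The only step demanding genuine care is the case $t=0$ with $n$ even, where $N_q(0)$ additionally carries $\eta((-1)^{n/2}a_1\cdots a_n)q^{(n-2)/2}(q-1)$. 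Writing $E=\eta((-1)^{n/2}a_1\cdots a_n)q^{(n-2)/2}$, I would observe that the top term $j=n/2$ of the sum in $N_q^*(0)$ equals exactly $E$, because $\sigma_n(\eta(a_1),\dots,\eta(a_n))=\eta(a_1\cdots a_n)$ and $\eta((-1)^{n/2})\eta(a_1\cdots a_n)=\eta((-1)^{n/2}a_1\cdots a_n)$. Peeling this term off and combining it with the $N_q(0)$ contribution yields $(q-1)E-(k_0+q-1)E=-k_0E$, so the coefficient on this term collapses to $-k_0$ while the surviving sum runs only up to $(n-2)/2$; this matches the first displayed formula.

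I do not anticipate any analytic obstacle: the argument is pure bookkeeping once the reduction to the simplified formula is in hand. The points to watch are the sign simplifications (recalling $(-1)^n=1$ when $n$ is even) and, above all, the extraction of the $j=n/2$ term in the $t=0$ even case, which must be merged with the $N_q(0)$ contribution so that its coefficient becomes the clean $-k_0$ rather than $-(k_0+q-1)$.
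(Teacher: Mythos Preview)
Your proposal is correct and follows exactly the paper's approach: the paper simply states that \eqref{eq4} gives $k_0=d$, applies Corollary~\ref{c1} to obtain $N_q=k_0(q-1)^{n-1}+N_q(0)-\frac{k_0+q-1}{q-1}N_q^*(0)$, and then substitutes Lemma~\ref{l3}. Your write-up actually supplies more of the algebraic bookkeeping (the common-term simplification and the extraction of the $j=n/2$ term in the $t=0$ even case) than the paper, which omits these details entirely.
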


\begin{theorem}
\label{t4}
Assume that $a_1=\dots=a_n=1$, $b$ is a $k_0$\emph{th} power in $\mathbb F_q$, $D>2$ and there exists a positive integer $\ell$ such that $D\mid(p^{\ell}+1)$, with $\ell$ chosen minimal.  Assume in addition that \eqref{eq4} holds. Then $2\ell\mid s$ and
\begin{align*}
N_q=\,&q^{n-1}-(-1)^n+\frac{(k_0-1)\bigl((q-1)^n-(-1)^n\bigr)}q\\
&-(-1)^{((s/2\ell)-1)n}k_0 q^{(n-2)/2}I(d_1,\dots,d_n)\\
&-(-1)^n(k_0+q-1)\sum_{r=2}^{n-1}(-1)^{rs/2\ell}q^{(r-2)/2}\sum_{1\le j_1<\dots<j_r\le n}
I(d_{j_1},\ldots,d_{j_r}).
\end{align*}
In particular, if $d_1=\dots=d_n=D$, then
\begin{align*}
N_q=\,&q^{n-1}-(-1)^n+\frac{(k_0-1)\bigl((q-1)^n-(-1)^n\bigr)}q\\
&-(-1)^{((s/2\ell)-1)n}k_0 q^{(n-2)/2}\cdot\frac{(D-1)^n+(-1)^n(D-1)}D\\
&-(-1)^n(k_0+q-1)\sum_{r=2}^{n-1}(-1)^{rs/2\ell}q^{(r-2)/2}\binom nr \cdot\frac{(D-1)^r+(-1)^r(D-1)}D.
\end{align*}
\end{theorem}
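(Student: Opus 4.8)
The plan is to build on the reduced formula already displayed in the text immediately before Theorem~\ref{t3}. Since hypothesis \eqref{eq4} is precisely the condition $k_0=d$, there is no multiplicative character $\psi$ satisfying both $\psi^d=\varepsilon$ and $\psi^{k_0}\ne\varepsilon$, so the character sum in Corollary~\ref{c1} is empty. As $b$ is a $k_0$th power, Corollary~\ref{c1} therefore collapses to
$$
N_q=k_0(q-1)^{n-1}+N_q(0)-\frac{k_0+q-1}{q-1}\,N_q^*(0).
$$
The divisibility $2\ell\mid s$ is inherited directly from Lemma~\ref{l4}, and the remaining task is purely to substitute the diagonal counts from Lemma~\ref{l4} and simplify.

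First I would insert the two expressions of Lemma~\ref{l4} into this formula. The term $N_q(0)$ contributes a single multiple of $I(d_1,\dots,d_n)$, whereas $N_q^*(0)$ contributes a sum over $r$ from $2$ to $n$. The crucial observation is that the top summand $r=n$ of that sum is again a multiple of $I(d_1,\dots,d_n)$, since the only strictly increasing $n$-tuple in $\{1,\dots,n\}$ is $(1,\dots,n)$. I would split off this $r=n$ summand and combine it with the $I(d_1,\dots,d_n)$ term coming from $N_q(0)$, leaving the residual $N_q^*(0)$ sum to run only from $r=2$ to $n-1$.

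The sign bookkeeping is the one place demanding care. Using the identity $(-1)^{((s/2\ell)-1)n}=(-1)^n(-1)^{ns/2\ell}$, the two contributions involving $I(d_1,\dots,d_n)$ share the common factor $(-1)^{((s/2\ell)-1)n}q^{(n-2)/2}I(d_1,\dots,d_n)$; the factor $q-1$ in $N_q^*(0)$ cancels the $1/(q-1)$, and the coefficients combine as $(q-1)-(k_0+q-1)=-k_0$. This yields exactly the isolated term $-(-1)^{((s/2\ell)-1)n}k_0q^{(n-2)/2}I(d_1,\dots,d_n)$ of the statement, while the residual sum over $2\le r\le n-1$ carries over with coefficient $-(-1)^n(k_0+q-1)$, matching the final display.

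It then remains to simplify the polynomial terms $k_0(q-1)^{n-1}+q^{n-1}-\frac{k_0+q-1}{q}\bigl((q-1)^{n-1}+(-1)^n\bigr)$. Factoring the $(q-1)^{n-1}$ terms gives $(q-1)^{n-1}\bigl(k_0-\tfrac{k_0+q-1}{q}\bigr)=\frac{(k_0-1)(q-1)^n}{q}$, and the $(-1)^n$ terms collapse upon noting $-\tfrac{k_0+q-1}{q}+1+\tfrac{k_0-1}{q}=0$; together with the untouched $q^{n-1}$ these reassemble into $q^{n-1}-(-1)^n+\frac{(k_0-1)((q-1)^n-(-1)^n)}{q}$, completing the general formula. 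The special case $d_1=\dots=d_n=D$ follows immediately by replacing each $I(d_{j_1},\dots,d_{j_r})$ with its closed form $\frac{(D-1)^r+(-1)^r(D-1)}{D}$ through Corollary~\ref{c2} and recording the $\binom nr$ equal terms for each $r$. I expect no genuine obstacle here: the entire argument is an application of Corollary~\ref{c1} and Lemma~\ref{l4} followed by algebraic simplification, the only delicate point being the careful isolation of the $r=n$ term and the reconciliation of the two forms of the sign $(-1)^{((s/2\ell)-1)n}$.
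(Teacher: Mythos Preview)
Your proposal is correct and follows precisely the route taken in the paper: the text preceding Theorem~\ref{t3} already records that under \eqref{eq4} Corollary~\ref{c1} collapses to $N_q=k_0(q-1)^{n-1}+N_q(0)-\frac{k_0+q-1}{q-1}N_q^*(0)$, and the paper then simply cites Lemma~\ref{l4} and Corollary~\ref{c2}, leaving the algebraic substitution and simplification (including the splitting off of the $r=n$ summand) implicit. Your writeup merely makes that algebra explicit, and the computations you outline are accurate.
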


\section{Numerical results}
\label{s6}

Our theoretical results are supported by numerical experiments. Some
numerical results are listed in Tables~\ref{tab1} and \ref{tab2}.

\begin{table}[h]
\caption{The case when $b$ is not a $k_0$th power in $\mathbb F_q$}
\label{tab1}
\newcolumntype{C}{>{\centering\arraybackslash}X}
\begin{tabularx}{\textwidth}{ccCCCccc}
\hline
$q$&$n$&$(a_1,\dots,a_n)$&$(m_1,\dots,m_n)$&$(k_1,\dots,k_n)$&$k$&$k_0$&$N_q$\\
\hline
16 & 5 & $(1,1,1,1,1)$ & $(2,4,6,8,10)$ & $(5,5,10,10,10)$ & 10 & 5 & 18076\\
17 & 6 & $(1,1,1,1,3,5)$ & $(2,6,6,8,10,14)$ & $(4,4,8,8,8,12)$ & 8 & 4 & 433249\\
19 & 6 & $(1,1,1,2,2,2)$ & $(2,2,2,6,14,14)$ & $(3,3,3,3,6,9)$ & 6 & 3 & 684901\\
25 & 5 & $(1,1,1,1,1,1)$ & $(3,9,10,15,18)$ & $(4,4,8,12,16)$ & 8 & 4 & 81553\\
31 & 4 & $(1,1,5,7)$ & $(5,7,9,11)$ & $(2,4,6,8)$ & 10 & 2 & 3661\\
43 & 5 & $(1,1,2,2,3)$ & $(5,8,8,12,28)$ & $(7,7,14,14,28)$ & 21 & 7 & 377665\\
81 & 4 & $(1,1,1,1)$ & $(4,4,12,28)$ & $(8,8,16,32)$ & 24 & 8 & 7041\\
\hline
\end{tabularx}
\end{table}

\begin{table}[h]
\caption{The case when $b$ is a $k_0$th power in $\mathbb F_q$}
\label{tab2}
\newcolumntype{C}{>{\centering\arraybackslash}X}
\begin{tabularx}{\textwidth}{ccCCCccc}
\hline
$q$&$n$&$(a_1,\dots,a_n)$&$(m_1,\dots,m_n)$&$(k_1,\dots,k_n)$&$k$&$k_0$&$N_q$\\
\hline
37 & 6 & $(1,1,1,2,2,2)$ & $(1,2,2,2,4,6)$ & $(9,9,9,9,9,18)$ & 9 & 9 & 539998021\\
47 & 5 & $(1,1,1,5,5)$ & $(3,7,8,12,14)$ & $(2,4,6,8,10)$ & 4 & 2 & 9261921\\
61 & 4 & $(1,1,1,2)$ & $(6,8,10,14)$ & $(6,6,6,6)$ & 12 & 6 & 1289641\\
64 & 4 & $(1,1,1,1)$ & $(9,18,27,36)$ & $(3,3,3,3)$ & 12 & 3 & 781975\\
71 & 3 & $(1,1,1)$ & $(3,15,49)$ & $(7,21,28)$ & 35 & 7 & 34028\\
81 & 5 & $(1,1,1,1,1)$ & $(3,15,30,35,70)$ & $(5,5,15,15,20)$ & 25 & 5 & 205707971\\
97 & 3 & $(1,7,7)$ & $(2,10,12)$ & $(4,4,4)$ & 8 & 4 & 36673\\
\hline
\end{tabularx}
\end{table}

\section{Conclusion}
In a similar manner, making use of our recent results on diagonal equations of the form $x_1^{2^m}+\dots+x_n^{2^m}=0$  \cite{B6}, we are able to determine $N_q$ explicitly when $d_1=\dots=d_n=2^m$ and $p\equiv\pm 3\pmod{8}$. Although this is straightforward in principle, the resulting formulas are quite cumbersome and for this reason are omitted here.


\end{document}